\author{L. L. de Lima}
\address{Departamento de Matem\'atica\\ Universidade Federal do Cear\'a\\ Brazil}
\author{P. Piccione}
\address{Departamento de Matem\'atica\\ Universidade de S\~ao Paulo\\ Brazil}
\author{M. Zedda}
\address{Universit\`a degli Studi di Cagliari\\ Italy}
\curraddr{Departamento de Matem\'atica\\ Universidade de S\~ao Paulo\\ Brazil}
\title[Uniqueness of solutions of the Yamabe problem]{A note on the uniqueness of solutions\\ for the Yamabe problem}
\date{Revised version of June 1st, 2011}
\subjclass[2000]{53C25, 58E11}
\thanks{The first author is partially sponsored by CNPq and Funcap, Brazil. The second author is partially sponsored
by CNPq and Fapesp, Brazil. The third author is supported by \emph{RAS}
through a grant financed with the ``Sardinia PO FSE 2007-2013'' funds and
provided according to the L.R.\ 7/2007.}
\begin{document}
% Theorems and such
\swapnumbers
\theoremstyle{plain}\newtheorem*{teon}{Theorem}
\theoremstyle{definition}\newtheorem*{defin*}{Definition}
\theoremstyle{plain}\newtheorem{teo}{Theorem}
\theoremstyle{plain}\newtheorem{prop}[teo]{Proposition}
\theoremstyle{plain}\newtheorem*{prop_n}{Proposition}
\theoremstyle{plain}\newtheorem{lem}[teo]{Lemma}
\theoremstyle{plain}\newtheorem*{lem_n}{Lemma}
\theoremstyle{plain}\newtheorem{cor}[teo]{Corollary}
\theoremstyle{definition}\newtheorem{defin}[teo]{Definition}
\theoremstyle{remark}\newtheorem{rem}[teo]{Remark}
\theoremstyle{plain} \newtheorem{assum}[teo]{Assumption}
\swapnumbers
\theoremstyle{definition}\newtheorem{example}{Example}[section]
\theoremstyle{plain}\newtheorem*{acknowledgement}{Acknowledgements}
\theoremstyle{definition}\newtheorem*{notation}{Notation}

\maketitle
\begin{abstract}
Using recent results on the compactness of the space of solutions of the Yamabe problem we show that in conformal classes of metrics near the class of a nondegenerate solution which is unique (up to scaling) the Yamabe problem has a unique solution as well. This provides examples of a local extension, in the space of conformal classes, of a well-known uniqueness criterion due to Obata.
\end{abstract}

\begin{section}{Introduction}
A celebrated result obtained by a combined effort of Yamabe \cite{Yam60}, Trudinger \cite{Tru68}, Aubin \cite{Aub76} and Schoen \cite{Schoen84} gives the existence of a constant scalar curvature metric in every
conformal class of Riemannian metrics on a compact manifold $M$.
Multiplicity results for solutions of the Yamabe problem have been proved in a variety of cases,
see for instance \cite{dLPZ, Petean08}.

Recently, remarkable results on the non compactness of the space of solutions for the Yamabe problem have been
obtained, first by Brendle, see \cite{Brendle}, and subsequently by Brendle and Marques, see  \cite{BreMar09}. A compactness theorem for $n\leq 24$ was proved in \cite{KhuMarSch} by Khuri,
Marques and Schoen.
Manuscript \cite{BreMar10} is an interesting survey on the compactness and the non compactness issues for the Yamabe problem. 

In the present paper we address the question
of \emph{uniqueness} of solutions of the Yamabe problem in conformal classes. A well-known result by Obata \cite{Oba72} says
that an Einstein metric, which happens to be a critical point of the Hilbert--Einstein functional in the manifold
of all Riemannian metrics with total volume $1$, is the unique  constant scalar curvature metric
in its conformal class up to scaling, provided it is not conformal to a round metric on the sphere. Such metrics are nondegenerate global minima of the Hilbert--Einstein
functional. The structure of the set of constant scalar curvature metrics in conformal classes near
a nondegenerate constant scalar curvature metric has been studied in the smooth case by Koiso \cite{Koiso79},
see also \cite[Ch.~4]{Besse}, using the language of ILH (inverse limit Hilbert) manifolds.
In this paper we will establish that, near a nondegenerate solution of the Yamabe problem, the set of unit volume and
constant scalar curvature metrics of class $\mathcal C^{k,\alpha}$, $k\ge2$, on a given compact manifold $M$,
with $m=\mathrm{dim}(M)\ge3$, is a smooth embedded submanifold of
the set of metrics, which is (strongly) transversal to the conformal classes (Proposition~\ref{thm:mankappaconstant}). This  transversality property immediately yields a local uniqueness result for the Yamabe problem in conformal classes near to that of a nondegenerate solution (Corollary~\ref{thm:localuniqueness}).
The proof is based on an abstract formulation of the inverse mapping theorem in vector bundles,
discussed in Section~\ref{sec:fiberwiseIFT}, whose proof only involves standard infinite dimensional
differential geometry techniques, thus completely avoiding the ILH formalism.
Moreover, the local uniqueness result, combined with the compactness results of \cite{KhuMarSch}, \cite{LiZhang, LiZhang2} and
\cite{Mar}, gives a global uniqueness result for solutions of the Yamabe problem in conformal classes near that of a  nondegenerate constant scalar curvature metric which is the unique unit volume
solution in its conformal class (Theorem~\ref{thm:main}). This applies in particular to  certain families of Einstein metrics appearing in Obata's uniqueness result mentioned above. Examples include any space form $M$ (other than the round sphere) satisfying either $m\leq 7$ or $m\leq 24$ and $M$ is spin, or any homogeneous Einstein manifold which is not locally conformally flat.
Thus our result provides examples of a local extension, in the space of conformal classes, of Obata's uniqueness criterion. Observe also that our global uniqueness result generalizes a well known result
by B\"ohm, Wang and Ziller that near a unit volume Einstein metric which is not conformally equivalent to the round sphere all unit volume scalar curvature metrics must be Yamabe metrics, i.e., global
minima of the Hilbert--Einstein functionals, see \cite[Theorem~5.1]{BohWanZil}.
\end{section}

\begin{section}{A fiberwise inverse function theorem}
\label{sec:fiberwiseIFT}
Let us first state an abstract formulation of an inverse mapping theorem which is suited to our purposes.
Let us introduce the following notations, terminology and conventions. Given a Banach manifold
$\mathcal M$, by a smooth Banach vector bundle $\mathcal E$ on $\mathcal M$ with typical fiber $E_0$
we always mean a vector bundle whose transition maps are smooth as maps from open subsets $U\subset\mathcal M$ to
$\mathrm{GL}(E_0)$. The zero section of a vector bundle will be denoted by $\mathbf 0$;
given $x_0\in\mathcal M$, the zero of the fiber $\mathcal E_{x_0}$ is denoted by $\mathbf 0_{x_0}$.
If $s:\mathcal M\to\mathcal E$ is a smooth section of the vector bundle $\mathcal E$ and $s(x_0)=\mathbf 0_{x_0}$,
then the \emph{vertical derivative} $\mathrm d^{\mathrm{ver}}s(x_0):T_{x_0}\mathcal M\to\mathcal E_{x_0}$ is the linear map
defined as the composition $P_{\mathrm{ver}}\circ\mathrm ds(x_0)$, where
$P_{\mathrm{ver}}:T_{\mathbf 0_{x_0}}\mathcal E\to\mathcal E_{x_0}$ is the projection relative to the decomposition
$T_{\mathbf 0_{x_0}}\mathcal E\cong T_{\mathbf 0_{x_0}}\mathbf 0\oplus T_{\mathbf 0_{x_0}}\mathcal E_{x_0}\cong T_{\mathbf 0_{x_0}}\mathbf 0\oplus\mathcal E_{x_0}$.
By a smooth distribution on $\mathcal M$ we mean a smooth vector subbundle of $T\mathcal M$; a submanifold
$\mathcal L\subset\mathcal M$ is said to be \emph{strongly transversal} to a distribution $\mathcal D\subset T\mathcal M$ if
at every point $x\in\mathcal L$, $T_x\mathcal L$ is a closed complement of $\mathcal D_x$, i.e.,
if $T_x\mathcal L\cap\mathcal D_x=\{0\}$ and $T_x\mathcal L+\mathcal D_x=T_x\mathcal M$.
\begin{prop}\label{thm:abstractinversemapping}
Let $\mathcal M$ be a Banach manifold, $\mathcal D\subset T\mathcal M$ be a smooth distribution on $\mathcal M$,
$\mathcal E\to\mathcal M$ a smooth vector bundle and $\mathfrak i:\mathcal E\to\mathcal D^*$ an injective smooth vector bundle morphism.
Assume that $F:\mathcal M\to\mathds R$ is a smooth function and that $s$ is a smooth section of $\mathcal E$ satisfying:
\begin{equation}\label{eq:reliDf}
\phantom{,\quad\forall\,x\in\mathcal M.}\mathfrak i\big(s(x)\big)=\mathrm dF(x)\big\vert_{\mathcal D_x},\quad\forall\,x\in\mathcal M.
\end{equation}
Assume that $\mathrm dF(x_0)\big\vert_{\mathcal D_{x_0}}=0$  (or equivalently, that $s(x_0)=0$)
at some $x_0\in\mathcal M$, and that
\[\mathrm d^{\mathrm{ver}}s(x_0)\big\vert_{\mathcal D_{x_0}}:\mathcal D_{x_0}\longrightarrow\mathcal E_{x_0}\]
is an isomorphism of Banach spaces.

Then, there exists an open subset $U\subset\mathcal M$ containing $x_0$ such that the set:
\[\mathcal L=\big\{x\in U:\mathrm dF(x)\big\vert_{\mathcal D_x}=0\big\}.\]
is a smooth embedded submanifold of $\mathcal M$ which is strongly transversal to $\mathcal D$.
\end{prop}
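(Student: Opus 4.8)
The plan is to reduce the statement to a submersion theorem for the section $s$, the key observation being that the injectivity of $\mathfrak i$ forces the level set $\mathcal L$ to coincide with the zero set of $s$. Indeed, by \eqref{eq:reliDf} and the injectivity of $\mathfrak i$, for every $x$ one has $\mathrm dF(x)\big\vert_{\mathcal D_x}=0$ if and only if $s(x)=\mathbf 0_x$; hence on any neighborhood $U$ of $x_0$ we have $\mathcal L=\{x\in U:s(x)=\mathbf 0_x\}=s^{-1}(\mathbf 0)\cap U$. It therefore suffices to show that near $x_0$ the zero set of $s$ is an embedded submanifold strongly transversal to $\mathcal D$.

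First I would record the linear algebra at $x_0$. Set $A=\mathrm d^{\mathrm{ver}}s(x_0):T_{x_0}\mathcal M\to\mathcal E_{x_0}$. By hypothesis $A\big\vert_{\mathcal D_{x_0}}$ is an isomorphism, so $P:=(A\big\vert_{\mathcal D_{x_0}})^{-1}\circ A$ is a bounded projection of $T_{x_0}\mathcal M$ onto $\mathcal D_{x_0}$ whose kernel is exactly $\ker A$. Consequently $A$ is surjective with complemented kernel and $T_{x_0}\mathcal M=\mathcal D_{x_0}\oplus\ker A$ with closed summands. In particular $\ker A$ is a closed complement of $\mathcal D_{x_0}$: this is the candidate for $T_{x_0}\mathcal L$, and it already encodes transversality at $x_0$.

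Next I would pass to a local trivialization $\Psi:\mathcal E\big\vert_{U'}\to U'\times E_0$ of $\mathcal E$ around $x_0$, with $U'$ also a chart domain, writing $s(x)=\Psi^{-1}\big(x,\sigma(x)\big)$ for a smooth principal part $\sigma:U'\to E_0$ with $\sigma(x_0)=0$. Under $\Psi$ the zero section corresponds to $U'\times\{0\}$ and the fiber to $\{x_0\}\times E_0$, so the canonical splitting at $\mathbf 0_{x_0}$ is carried to the product splitting and $P_{\mathrm{ver}}$ becomes projection onto the $E_0$ factor; hence $\mathrm d^{\mathrm{ver}}s(x_0)=\mathrm d\sigma(x_0)=A$ under the identification $\mathcal E_{x_0}\cong E_0$. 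Thus $\mathrm d\sigma(x_0)$ is a split surjection and $\sigma$ is a submersion at $x_0$; shrinking $U'$ to an open set $U\ni x_0$ it is a submersion throughout $U$, so by the implicit function theorem for Banach manifolds $\sigma^{-1}(0)\cap U=\mathcal L$ is a smooth embedded submanifold with $T_x\mathcal L=\ker\mathrm d\sigma(x)$ for every $x\in\mathcal L$.

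Finally, for transversality at nearby points I would use openness of invertibility. Trivializing also the smooth subbundle $\mathcal D$ near $x_0$, the assignment $x\mapsto\mathrm d\sigma(x)\big\vert_{\mathcal D_x}$ is a continuous family of bounded operators into $E_0$ whose value at $x_0$ is invertible; since the invertible operators form an open set, after shrinking $U$ the restriction $\mathrm d\sigma(x)\big\vert_{\mathcal D_x}:\mathcal D_x\to E_0$ is an isomorphism for all $x\in U$. Repeating at each such $x$ the projection argument of the second step yields $T_x\mathcal M=\mathcal D_x\oplus\ker\mathrm d\sigma(x)=\mathcal D_x\oplus T_x\mathcal L$ with closed summands, which is precisely strong transversality of $\mathcal L$ to $\mathcal D$ along $U$. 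The only delicate points are the intrinsic identification of $\mathrm d^{\mathrm{ver}}s(x_0)$ with $\mathrm d\sigma(x_0)$, which is what lets the abstract hypothesis feed the implicit function theorem, and verifying that the two summands are genuinely closed and complementary; both are supplied by the bounded projection $P$. I expect the bookkeeping of these splittings, rather than any single deep estimate, to be the main thing to get right.
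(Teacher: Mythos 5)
Your proposal is correct and follows essentially the same route as the paper: use the injectivity of $\mathfrak i$ to identify $\mathcal L$ with $s^{-1}(\mathbf 0)\cap U$, observe that the hypothesis on $\mathrm d^{\mathrm{ver}}s(x_0)\big\vert_{\mathcal D_{x_0}}$ makes $s$ (strongly) transversal to the zero section with $\mathcal D_{x_0}$ a closed complement of the kernel, and propagate the splitting to nearby points by openness of invertibility. The only difference is one of exposition — you unwind the transversality statement into an explicit local trivialization, principal part $\sigma$, bounded projection $P$, and the implicit function theorem, all of which the paper leaves implicit.
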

\begin{proof}
The assumption that $\mathrm d^{\mathrm{ver}}s(x_0)$ is an isomorphism from $\mathcal D_{x_0}$ to $\mathcal E_{x_0}$
implies that $s$ is transversal to the zero section of $\mathcal E$ at $x_0$.
Observe that the composition of $\mathrm ds(x_0):T_{x_0}\mathcal M\to T_{\mathbf 0_{x_0}}\mathcal E$ with the projection
$T_{\mathbf 0_{x_0}}\mathcal E\to T_{\mathbf 0_{x_0}}\mathcal E/T_{x_0}\mathbf 0\cong\mathcal E_{x_0}$ is identified with
the vertical derivative $\mathrm d^{\mathrm{ver}}s(x_0)$, which is therefore surjective. Moreover, the
space $\mathcal D_{x_0}$ is a closed complement of $\mathrm{Ker}\big(\mathrm ds(x_0)\big)$, i.e., $s$ is strongly transversal to $\mathbf 0$ at $x_0$.
Since $\mathfrak i$ is injective, from \eqref{eq:reliDf} it follows easily that the set $\mathcal L$ can also
be described as:
\[\mathcal L=U\cap s^{-1}(\mathbf 0);\]
it follows that for $U$ sufficiently small, $\mathcal L$ is a smooth embedded submanifold of $\mathcal M$, and that
for $x\in\mathcal L$, $T_x\mathcal L=\mathrm{Ker}\big(\mathrm ds(x)\big)$. Note that for $x\in\mathcal L$ near $x_0$, $\mathcal D_x$
is a closed complement of $\mathrm{Ker}\big(\mathrm ds(x)\big)$, i.e., $\mathcal L$ is strongly transversal to $\mathcal D$.
\end{proof}
\end{section}
\begin{section}{The manifold of unit volume metrics and conformal classes}
We will now discuss an application of Proposition~\ref{thm:abstractinversemapping} in the context of constant
scalar curvature metrics.
Let  $M$ be a compact manifold, $m=\mathrm{dim}(M)\ge3$, and given $k\ge2$, $\alpha\in\left]0,1\right]$, denote by
$\mathrm{Met}^{k,\alpha}(M)$ the set of Riemannian metrics of class $\mathcal C^{k,\alpha}$ on $M$, i.e., the open
subset of the Banach space $\Gamma^{k,\alpha}(\vee^2TM^*)$ consisting of all $\mathcal C^{k,\alpha}$-sections of the vector bundle of $\vee^2(TM^*)$
consisting of symmetric (0,2)-tensors on $M$ that are pointwise positive definite.
For $g\in\mathrm{Met}^{k,\alpha}(M)$, we will denote by $\nu_g$ the volume form (or density, if $M$ is not orientable) of $g$, by $\mathrm{Ric}_g$
the Ricci curvature of $g$, and by $\kappa_g$
its scalar curvature function, which is a function of class $\mathcal C^{k-2,\alpha}$ on $M$.
The \emph{volume function} $\mathcal V$ on $\mathrm{Met}^{k,\alpha}(M)$ is defined by:
\[\mathcal V(g)=\int_M\nu_g.\]
Observe that $\mathcal V$ is smooth, and its differential is given by:
\begin{equation}\label{eq:difvolume}
\mathrm d\mathcal V(g)h=\tfrac12\int_M\mathrm{tr}_g(h)\,\nu_g,
\end{equation}
for all $h\in\Gamma^{k,\alpha}(\vee^2TM^*)$.
Set:
\begin{equation}\label{eq:mathcalMM}
\mathcal M=\big\{g\in\mathrm{Met}^{k,\alpha}(M):\mathcal V(g)=1\big\};
\end{equation}
this is a smooth embedded submanifold of $\mathrm{Met}^{k,\alpha}(M)$. For $g\in\mathcal M$,
the tangent space $T_g\mathcal M$ is given by:
\[T_g\mathcal M=\Big\{h\in\Gamma^{k,\alpha}\big(\vee^2(TM^*)\big):\smallint_M\mathrm{tr}_g(h)\,\nu_g=0\Big\}.\]
Consider the Hilbert--Einstein functional $F:\mathcal M\longrightarrow\mathds R$,
defined by:
\begin{equation}\label{eq:HEfunctional}
F(g)=\int_M\kappa_{g}\,\nu_{g};
\end{equation}
this is a smooth function on $\mathcal M$, whose critical points are the \emph{Einstein metrics} of volume $1$ on $M$. Finally, for $g\in\mathcal M$, let $\mathcal D_g$ be the closed subspace of $T_g\mathcal M$ given by:
\[\mathcal D_g=\big\{\varphi\cdot g:\varphi\in\mathcal C^{k,\alpha}(M,\mathds R):\smallint_M\varphi\,\nu_g=0\big\}.\]
\begin{lem}\label{thm:lemdistribution}
As to the above setting, the following hold:
\begin{itemize}
\item[(a)] $\mathcal D=\bigcup_{g\in\mathcal M}\mathcal D_g$ is a smooth integrable distribution on $\mathcal M$;
\item[(b)] for $g\in\mathcal M$, the maximal connected integral submanifold of $\mathcal D$ through $g$ is the subset $D_g\subset\mathcal M$ given by:
\[D_g=\big\{\phi\cdot g:\phi\in\mathcal C^{k,\alpha}(M,\mathds R):\phi>0,\ \int_M\phi^{\frac m2}\nu_g=1\big\}.\]
\end{itemize}
\end{lem}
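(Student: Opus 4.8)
The plan is to treat the two items together, deducing the integrability asserted in (a) from the explicit description of the leaves in (b); only the smoothness of $\mathcal{D}$ in (a) needs a separate argument. First I would record the elementary fact that $\mathcal{D}_g\subset T_g\mathcal{M}$: for $h=\varphi\cdot g$ one has $\mathrm{tr}_g(h)=m\varphi$, so the condition $\int_M\mathrm{tr}_g(h)\,\nu_g=m\int_M\varphi\,\nu_g=0$ exactly encodes the defining constraint; in fact $\mathcal{D}_g=E_g\cap T_g\mathcal{M}$, where $E_g=\{\varphi\cdot g:\varphi\in\mathcal{C}^{k,\alpha}(M,\mathds R)\}$ is the space of pure-trace directions at $g$.

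For the smoothness in (a), I would exhibit a smooth local trivialization of $\mathcal{D}$ around an arbitrary $g_0\in\mathcal{M}$. Fix the Banach space $V=\{\varphi\in\mathcal{C}^{k,\alpha}(M,\mathds R):\int_M\varphi\,\nu_{g_0}=0\}$ and, on a neighborhood $\mathcal{U}$ of $g_0$, set $\tau(g,\varphi)=\big(g,(\varphi-\int_M\varphi\,\nu_g)\cdot g\big)$, using $\mathcal{V}(g)=1$ so that $\int_M\varphi\,\nu_g$ is the $\nu_g$-mean of $\varphi$. Then $\tau$ is smooth in $g$ and linear in $\varphi$, its image lies in $\mathcal{D}_g$ by construction, and for each $g$ it is a Banach-space isomorphism onto $\mathcal{D}_g$ (injectivity forces $\varphi$ to be constant, hence $\varphi=0$ by the constraint defining $V$; surjectivity is a one-line adjustment by an additive constant). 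This shows $\mathcal{D}$ is a smooth subbundle of $T\mathcal{M}$.

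The core is (b). Introduce the open positive cone $C_g=\{\phi\in\mathcal{C}^{k,\alpha}(M,\mathds R):\phi>0\}$ and the map $G:C_g\to\mathds R$, $G(\phi)=\int_M\phi^{\frac m2}\,\nu_g$; since $\nu_{\phi g}=\phi^{\frac m2}\nu_g$ one has $\mathcal{V}(\phi g)=G(\phi)$, so $D_g$ is the image of $G^{-1}(1)$ under $\phi\mapsto\phi\cdot g$. A direct computation gives $\mathrm dG(\phi)\psi=\frac m2\int_M\phi^{\frac m2-1}\psi\,\nu_g$, which is nonzero for every $\phi\in C_g$ (test with $\psi=\phi^{1-\frac m2}$), so $G$ is a submersion and $G^{-1}(1)$ is a smooth codimension-one embedded submanifold of $C_g$ with $T_\phi G^{-1}(1)=\{\psi:\int_M\phi^{\frac m2-1}\psi\,\nu_g=0\}$. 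As $\phi\mapsto\phi\cdot g$ is the restriction of a bounded linear injection onto a closed subspace, it is an embedding, whence $D_g$ is a smooth embedded submanifold of $\mathcal{M}$; I would then check $T_{\phi g}D_g=\mathcal{D}_{\phi g}$ by writing a generic element $\chi\cdot(\phi g)\in\mathcal{D}_{\phi g}$ as $\psi\cdot g$ with $\psi=\chi\phi$, so that $\int_M\chi\,\nu_{\phi g}=\int_M\chi\phi^{\frac m2}\nu_g=0$ becomes $\int_M\psi\phi^{\frac m2-1}\nu_g=0$, matching the tangent space above exactly. Thus $D_g$ is an integral manifold of $\mathcal{D}$ through $g$ (note $\phi\equiv1\in G^{-1}(1)$), and producing such a manifold through every point establishes the integrability in (a). Connectedness of $D_g$ follows from the continuous retraction $\phi\mapsto G(\phi)^{-2/m}\phi$ of the convex, hence connected, cone $C_g$ onto $G^{-1}(1)$.

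The main obstacle is maximality, i.e.\ that no connected integral manifold through $g$ is strictly larger than $D_g$. Here I would argue that tangency to $\mathcal{D}$ forces the conformal class to be preserved: if $t\mapsto h(t)$ is a smooth curve with $\dot h(t)\in\mathcal{D}_{h(t)}$, then $\dot h(t)=\chi(t)\,h(t)$ pointwise on $M$, where $\chi(t)=\tfrac1m\mathrm{tr}_{h(t)}\dot h(t)\in\mathcal{C}^{k,\alpha}(M,\mathds R)$. Solving this pointwise linear ODE yields $h(t)=\rho(t)\,h(0)$ with $\rho(t)=\exp\!\big(\int_0^t\chi(s)\,\mathrm ds\big)>0$, so the curve stays in the positive conformal class of $h(0)=g$ and, lying in $\mathcal{M}$, remains in $D_g$. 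Since any connected integral manifold $N\ni g$ is path-connected, joining an arbitrary point of $N$ to $g$ by such a curve shows $N\subset D_g$; together with connectedness this identifies $D_g$ as the maximal connected integral manifold. The only point requiring care is the $\mathcal{C}^{k,\alpha}$-bookkeeping for $\chi$ and $\rho$, which is harmless because $h(t)$ is a positive-definite $\mathcal{C}^{k,\alpha}$ metric, so its inverse, and hence $\chi$, remain of class $\mathcal{C}^{k,\alpha}$ with no loss of derivatives.
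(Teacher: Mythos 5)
Your proof is correct. The core of part (b) --- realizing $D_g$ as the image of the level set $\mathcal V_g^{-1}(1)$ of the submersion $\phi\mapsto\int_M\phi^{\frac m2}\nu_g$ under the linear embedding $\phi\mapsto\phi\cdot g$ with left inverse $\tfrac1m\mathrm{tr}_g$ --- is exactly the paper's argument (you are in fact slightly more careful, checking $T_{\phi g}D_g=\mathcal D_{\phi g}$ at every point of the leaf rather than only at $\phi\equiv1$, and making the connectedness explicit via the radial retraction). You diverge in two places. For smoothness of $\mathcal D$, the paper exhibits $\mathcal D$ as the image of the constant-rank bundle morphism $(g,\varphi)\mapsto\varphi\cdot g$ on the subbundle $\{\int_M\varphi\,\nu_g=0\}$, using a footnoted lemma that a morphism admitting a left inverse has constant rank; your trivialization $\tau(g,\varphi)=\big(g,(\varphi-\int_M\varphi\,\nu_g)\cdot g\big)$ over the fixed model space $V$ is an equivalent device, but you should also record a smooth fiberwise left inverse (e.g.\ $h\mapsto\tfrac1m\mathrm{tr}_g(h)$ corrected by its $\nu_{g_0}$-mean), since in the Banach setting ``smooth subbundle'' requires $\mathcal D_g$ to be complemented in $T_g\mathcal M$ --- this is precisely what the paper's constant-rank footnote supplies, and it is available to you for free. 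The genuinely different step is maximality: the paper notes that $\mathcal M$ is the disjoint union of the connected integral submanifolds $D_g$ and invokes an elementary, unproved lemma that every member of such a partition is automatically a maximal leaf; you instead integrate the pointwise linear ODE $\dot h(t)=\chi(t)\,h(t)$ along piecewise-smooth curves tangent to $\mathcal D$ to show directly that any connected integral submanifold through $g$ remains in the conformal class of $g$, hence inside $D_g$. Your route is more self-contained --- it avoids any appeal to foliation theory and in particular does not need the leaves to be known a priori --- at the price of the ODE bookkeeping in $\mathcal C^{k,\alpha}$, which you handle correctly; the paper's route is shorter but pushes the real content into its footnote.
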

\begin{proof}
Statement (a) is proven by showing that $\mathcal D$ is the image of a smooth vector bundle morphism with
\emph{constant rank}.\footnote{Given vector bundles $\mathcal E$ and $\mathcal F$ over a manifold $M$, with typical fibers
$E_0$ and $F_0$ respectively, a vector bundle morphism $\phi:\mathcal E\to\mathcal F$ is said to have constant
rank if, identifying the fiber $\mathcal E_x\cong E_0$, $\mathcal F_x\cong F_0$, $x\in U\subset M$, via local trivializations
of $\mathcal E$ and $\mathcal F$, there exists a closed subspace $G\subset F_0$ such that $\phi_x(E_0)\oplus G=F_0$ for all $x\in U$.
Such condition does not depend on the choice of trivializations. If $\phi$ has constant rank, then $\phi(\mathcal E)$ is a subbundle
of $\mathcal F$.
\begin{lem_n}
If $\phi:\mathcal E\to\mathcal F$ has a left inverse, then it has fiberwise closed image and constant rank.
\end{lem_n}
\begin{proof}
A bounded linear operator $T:E_0\to F_0$ between Banach spaces that admits a left inverse $S:F_0\to E_0$ has closed image, and $\mathrm{Ker}(S)$ is a closed complement for $\mathrm{Im}(T)$ (trivial).
Given trivializations of $\mathcal E$ and $\mathcal F$ over an open subset $U\subset M$, write
$\phi$ and its left inverse $\psi$ as maps $U\ni x\mapsto\phi_x\in\mathrm{Lin}(E_0,F_0)$ and
$U\ni x\mapsto\psi_x\in\mathrm{Lin}(F_0,E_0)$. Now, given $x_0\in U$, for all $x\in U$ the map
$(\psi_{x_0}\circ\phi_x)^{-1}\circ\psi_{x_0}$ is a left inverse of $\phi_x$. It follows that:
\[\mathrm{Ker}\big((\psi_{x_0}\circ\phi_x)^{-1}\circ\psi_{x_0}\big)=\mathrm{Ker}(\psi_{x_0})\]
is a fixed closed complement of $\mathrm{Im}(\phi_x)$ for all $x\in U$, i.e., $\phi$ has constant rank.
\end{proof}
}
Consider the trivial vector bundles (over $\mathcal M$) $E_1=\mathcal M\times\mathcal C^{k,\alpha}(M,\mathds R)$, $E_2=\mathcal M\times\mathds R$,
and the smooth vector bundle morphism $f_1:E_1\to E_2$ defined by \[f_1(g,\varphi)=\big(g,\smallint_M\varphi\,\nu_g\big).\]
Clearly, $f_1$ is surjective, and its kernel
\[E_3=\bigcup_{g\in\mathcal M}\{g\}\times\big\{\varphi\in\mathcal C^{k,\alpha}(M,\mathds R):\smallint_M\varphi\,\nu_g=0\big\},\]
which is closed and complemented, is a smooth vector subbundle of $E_1$. Now, the smooth bundle morphism $f_2:E_3\to T\mathcal M$
defined by $(g,\varphi)\mapsto(g,\varphi\cdot g)$ has image $\mathcal D$, and it has constant rank.
Namely, the fiber bundle morphism $T\mathcal M\ni (g,\overline g)\mapsto\big(g,\frac1m\mathrm{tr}_g(\overline g)\big)\in E_3$ is a left inverse of $f_2$. This proves (a).
For (b), observe that $\phi^\frac m2\nu_g$ is the volume form of the metric $\phi\cdot g$.
For $g\in\mathcal M$, consider the smooth map $\mathcal V_g:\mathcal C^{k,\alpha}(M,\mathds R)\ni\phi\mapsto\int_M\phi^\frac m2\nu_g\in\mathds R$,
whose derivative is
\begin{equation}\label{eq:derVg}
\mathrm d\mathcal V_g(\phi)\varphi=\frac m2\int_M\phi^{\frac m2-1}\varphi\,\nu_g.
\end{equation}
It is easily checked that $\mathcal V_g$ has no critical values, so that the
set $\mathcal V_g^{-1}(1)$ is a smooth embedded submanifold of $\mathcal C^{k,\alpha}(M,\mathds R)$.
Now, for $g\in\mathcal M$, the map $I:\mathcal V_g^{-1}(1)\ni\phi\mapsto\phi\cdot g\in\mathcal M$ is smooth, and a smooth left-inverse is
given by the map $\overline g\mapsto\frac1m\mathrm{tr}_g(\overline g)$. It follows that $I$ is an embedding, and
its image is $D_g$, which is therefore a smooth embedded submanifold of $\mathcal M$.
Using \eqref{eq:derVg} at $\phi=1$, one proves easily $T_gD_g=\mathcal D_g$, i.e., $D_g$ is an integral submanifold of $\mathcal D$. It is easy to see that $D_g$ is connected (it is homeomorphic to the set
of positive functions $\phi\in\mathcal C^{k,\alpha}$ satisfying $\int_M\phi^\frac m2\,\nu_g=1$),
and that $\mathcal M$ is the disjoint union of all $D_g$'s. It follows\footnote{We are using the following elementary result:
\begin{lem_n}Let $\mathcal M$ be a manifold and $\mathcal D\subset T\mathcal M$ an integrable distribution.
Assume that $\mathcal M$ is the disjoint union $\bigcup_{i\in I}S_i$
of connected integral submanifolds of $\mathcal D$. Then, every $S_i$ is a maximal connected integral submanifold of $\mathcal D$.\end{lem_n}}
that every $D_g$ is a maximal connected
integral submanifold of $\mathcal D$.
\end{proof}
The \emph{$\mathcal C^{k,\alpha}$-conformal class} of $g$ is the submanifold of $\mathrm{Met}^{k,\alpha}(M)$
\[\big[g\big]_{k,\alpha}=\big\{\phi\cdot g:\phi\in\mathcal C^{k,\alpha}(M,\mathds R):\phi>0\big\};\]
clearly, $D_g=\mathcal M\cap\big[g\big]_{k,\alpha}$. From the proof of Lemma~\ref{thm:lemdistribution}, part (b),
 one deduces easily that $\mathcal M$ and $\big[g\big]_{k,\alpha}$ are transversal.\smallskip

Let us recall the first and the second variational formula for the function $F$ restricted to a conformal
class $D_g$, see \cite{Koiso78, Schoen87}
for details. We identify the tangent space $T_gD_g$ with the space of
 functions $f\in\mathcal C^{k,\alpha}(M,\mathds R)$  with $\int_Mf\,\nu_g=0$. Given one such $f$, the first derivative
 of $F\big\vert_{D_g}$ in the direction $f$ is:
\begin{equation}\label{eq:firstvar}
\mathrm dF(g)(f)=\frac{m-2}2\int_Mf\kappa_g\,\nu_g;
\end{equation}
if $g$ has constant scalar curvature, then:
\begin{equation}\label{eq:secondvar}
\mathrm d^2F(g)\big(f,f\big)=\frac{m-2}2\int_M\big((m-1)\Delta_gf-\kappa_gf\big)f\,\nu_g,
\end{equation}
where $\Delta_g$ is the Laplace--Beltrami operator of the metric $g$.

\end{section}
\begin{section}{The manifold of unit volume constant scalar curvature metrics}
Given $g\in\mathcal M$ with constant scalar curvature $\kappa_g$ in $M$, we say that $g$ is \emph{nondegenerate}
if either $\kappa_g=0$ or if $\frac{\kappa_g}{m-1}$ is not an eigenvalue of the Laplace--Beltrami operator
$\Delta_g$ of $g$.
\begin{prop}\label{thm:mankappaconstant}
Let $g_*\in\mathcal M$ be a nondegenerate constant scalar curvature metric. Then, there exists an
open neighborhood $U$ of $g_*$ in $\mathcal M$ such that the set:
\begin{equation}\label{eq:manunitvolconstsccurv}
\big\{g\in U: \kappa_g\ \text{is constant}\big\}
\end{equation}
is a smooth embedded submanifold of $\mathcal M$ which is strongly transversal to the conformal classes.
\end{prop}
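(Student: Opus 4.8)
The plan is to deduce the statement from the abstract Proposition~\ref{thm:abstractinversemapping}, applied to $\mathcal M$, the distribution $\mathcal D$ of Lemma~\ref{thm:lemdistribution} (whose integral leaves are, by part (b), the conformal classes $D_g$), and the Hilbert--Einstein functional $F$. The first step is to observe, via the first variation formula \eqref{eq:firstvar}, that the set \eqref{eq:manunitvolconstsccurv} coincides with the critical set $\big\{g\in U:\mathrm dF(g)\vert_{\mathcal D_g}=0\big\}$: indeed $\mathrm dF(g)(\varphi\cdot g)=\tfrac{m-2}2\int_M\varphi\,\kappa_g\,\nu_g$ vanishes for every mean-zero $\varphi$ precisely when $\kappa_g$ is constant. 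This is exactly the set $\mathcal L$ produced by Proposition~\ref{thm:abstractinversemapping}, so it remains to construct data $(\mathcal E,\mathfrak i,s)$ satisfying the hypotheses and to check the vertical-derivative condition at $g_*$.

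Next I would introduce the smooth vector bundle $\mathcal E\to\mathcal M$ whose fiber at $g$ is the closed subspace $\big\{u\in\mathcal C^{k-2,\alpha}(M):\smallint_M u\,\nu_g=0\big\}$ (a subbundle of the trivial bundle $\mathcal M\times\mathcal C^{k-2,\alpha}(M)$, constructed exactly as $E_3$ in the proof of Lemma~\ref{thm:lemdistribution}), together with the section $s(g)=\kappa_g-\bar\kappa_g$, where $\bar\kappa_g=\int_M\kappa_g\,\nu_g$ is the mean scalar curvature; thus $s(g)=0$ iff $\kappa_g$ is constant. The bundle morphism $\mathfrak i:\mathcal E\to\mathcal D^*$ sending $(g,u)$ to the functional $\varphi\cdot g\mapsto\tfrac{m-2}2\int_M\varphi\,u\,\nu_g$ is smooth and injective (if $\int_M\varphi u\,\nu_g=0$ for all mean-zero $\varphi$ then $u$ is constant, hence $u=0$ by the mean-zero condition), and a direct computation using $\int_M\varphi\,\nu_g=0$ shows $\mathfrak i\big(s(g)\big)=\mathrm dF(g)\vert_{\mathcal D_g}$, so \eqref{eq:reliDf} holds on all of $\mathcal M$.

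The decisive step is to compute the vertical derivative $\mathrm d^{\mathrm{ver}}s(g_*)\vert_{\mathcal D_{g_*}}$ and to show it is an isomorphism; this is where nondegeneracy enters. Since $g_*$ has constant scalar curvature, $s(g_*)=0$, and linearizing $\kappa_g$ in a conformal direction $\varphi\cdot g_*$ (equivalently, reading off the Jacobi operator from the second variation \eqref{eq:secondvar}) gives $\mathrm d^{\mathrm{ver}}s(g_*)(\varphi\cdot g_*)=(m-1)\Delta_{g_*}\varphi-\kappa_{g_*}\varphi$, the derivative of the mean term $\bar\kappa_g$ contributing nothing along mean-zero $\varphi$. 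Identifying $\mathcal D_{g_*}$ and $\mathcal E_{g_*}$ with the mean-zero functions of class $\mathcal C^{k,\alpha}$ and $\mathcal C^{k-2,\alpha}$, the vertical derivative is the operator $L=(m-1)\Delta_{g_*}-\kappa_{g_*}$, which indeed maps mean-zero functions to mean-zero functions.

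Finally I would verify that $L$ is a Banach-space isomorphism between these mean-zero subspaces, which is precisely the content of nondegeneracy. When $\kappa_{g_*}=0$ this is the standard fact that $\Delta_{g_*}:\mathcal C^{k,\alpha}\to\mathcal C^{k-2,\alpha}$ restricts to an isomorphism of the mean-zero subspaces. When $\kappa_{g_*}\ne0$, nondegeneracy says $\tfrac{\kappa_{g_*}}{m-1}$ is not in the spectrum of $\Delta_{g_*}$, so $L=(m-1)\big(\Delta_{g_*}-\tfrac{\kappa_{g_*}}{m-1}\big)$ is a bijection of $\mathcal C^{k,\alpha}$ onto $\mathcal C^{k-2,\alpha}$ by elliptic theory; injectivity on the subspace is immediate, and if $L\varphi=u$ with $u$ mean-zero then integrating yields $-\kappa_{g_*}\int_M\varphi\,\nu_{g_*}=0$, whence $\varphi$ is mean-zero, giving surjectivity onto $\mathcal E_{g_*}$. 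With the hypotheses of Proposition~\ref{thm:abstractinversemapping} thus verified, $\mathcal L$ is a smooth embedded submanifold strongly transversal to $\mathcal D$; since by Lemma~\ref{thm:lemdistribution}(b) the leaves of $\mathcal D$ are the conformal classes $D_g$, this is exactly strong transversality to the conformal classes. I expect the main obstacle to be the isomorphism check: one must carry out the conformal linearization of the scalar curvature correctly and then invoke elliptic theory to pass from invertibility on the full H\"older spaces to invertibility on the mean-zero subspaces.
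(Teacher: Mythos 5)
Your proposal is correct and follows essentially the same route as the paper: the same application of Proposition~\ref{thm:abstractinversemapping} with the same $\mathcal M$, $\mathcal D$, $\mathcal E$, the section $\kappa_g-\bar\kappa_g$ (the paper merely carries the constant $\tfrac{m-2}{2}$ in $s$ rather than in $\mathfrak i$), and the identification of the vertical derivative with $(m-1)\Delta_{g_*}-\kappa_{g_*}$ on mean-zero functions, with nondegeneracy giving the isomorphism. You also supply details the paper leaves implicit (injectivity of $\mathfrak i$ and the restriction of the elliptic isomorphism to the mean-zero subspaces), which is welcome but not a departure in method.
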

\begin{proof}
This is a direct application of Proposition~\ref{thm:abstractinversemapping} to the following setup.
The manifold $\mathcal M$ is defined in \eqref{eq:mathcalMM} and the distribution $\mathcal D$ in
Lemma~\ref{thm:lemdistribution}. The vector bundle $\mathcal E$ has fibers:
\[\mathcal E_g=\big\{f\in\mathcal C^{k-2,\alpha}(M,\mathds R):\smallint_Mf\,\nu_g=0\big\},\quad g\in\mathcal M,\]
and the vector bundle morphism $\mathfrak i:\mathcal E\to\mathcal D^*$ is induced by the $L^2$-pairing,
i.e.,
\[\mathfrak i_g(f_1)f_2=\int_Mf_1f_2\,\nu_g,\quad g\in\mathcal M,\ f_1\in\mathcal E_g,\ f_2\in\mathcal D_g.\]
The function $F:\mathcal M\to\mathds R$ is the Hilbert--Einstein functional \eqref{eq:HEfunctional}, and
the section $s:\mathcal M\to\mathcal E$ is given by:
\begin{align*}
s(g)&=-\mathrm{tr}_g\big(\mathrm{Ric}_g-\tfrac12g\,\kappa_g\big)+\int_M\big(\mathrm{Ric}_g-\tfrac12g\,\kappa_g\big)\nu_g\\
&=\big(\tfrac m2-1\big)\kappa_g-\big(\tfrac m2-1\big)\int_M\kappa_g\,\nu_g.
\end{align*}
Note that $s(g)=0$ if and only if $\kappa_g$ is constant. Given one such $g$, the composition
$\mathfrak i_g\circ\mathrm d^{\mathrm{ver}}s(g)\big\vert_{\mathcal D_g}:\mathcal D_g\to\mathcal D_g^*$
is identified with the second variation $\mathrm d^2(F\vert_{D_g})(g)$ of the restriction of $F$ to
the integral submanifold $D_g$.
Using \eqref{eq:firstvar} one checks easily that equality \eqref{eq:reliDf} in Proposition~\ref{thm:abstractinversemapping} holds. Moreover,
using formula \eqref{eq:secondvar}, the map $\mathrm d^{\mathrm{ver}}s(g_*)\big\vert_{\mathcal D_{g_*}}:\mathcal D_{g_*}\to\mathcal E_{g_*}$ is identified with the linear operator \[f\longmapsto\frac{m-2}2\big((m-1)\Delta_{g_*}f-\kappa_{g_*}f\big),\]
defined on the space of functions $f\in\mathcal C^{k,\alpha}(M,\mathds R)$ with $\int_Mf\,\nu_{g_*}=0$ and taking values
in the space of functions $f\in\mathcal C^{k-2,\alpha}(M,\mathds R)$ with $\int_Mf\,\nu_{g_*}=0$. Observe that such operator
preserves the space of functions with zero integral.
The assumption of nondegeneray for $g_*$ is equivalent to the fact that such operator is an isomorphism, and Proposition~\ref{thm:abstractinversemapping}
can be applied to obtain the conclusion.
\end{proof}

\begin{cor}\label{thm:localuniqueness}
Let $g_*\in\mathcal M$ be a nondegenerate constant scalar curvature metric. Then, there exists an
open neighborhood $U$ of $g_*$ in $\mathcal M$ such that every conformal class of metrics in $M$ contains at most
one unit volume constant curvature metric in $U$.
\end{cor}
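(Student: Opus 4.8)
The plan is to deduce Corollary~\ref{thm:localuniqueness} directly from the transversality conclusion of Proposition~\ref{thm:mankappaconstant}. Let $\mathcal{S}$ denote the smooth embedded submanifold of unit volume constant scalar curvature metrics in the neighborhood $U$ furnished by that proposition, which is strongly transversal to the distribution $\mathcal{D}$. The key point is that two unit volume constant scalar curvature metrics lie in the same conformal class precisely when they lie in the same maximal connected integral submanifold $D_g$ of $\mathcal{D}$, by Lemma~\ref{thm:lemdistribution}(b) together with the identity $D_g = \mathcal{M}\cap[g]_{k,\alpha}$. So the uniqueness claim reduces to showing that, after possibly shrinking $U$, the submanifold $\mathcal{S}$ meets each leaf $D_g$ in at most one point.

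First I would translate strong transversality into the infinitesimal statement that at each point $x\in\mathcal{S}$ one has the topological direct sum $T_x\mathcal{S}\oplus\mathcal{D}_x = T_x\mathcal{M}$. This means $\mathcal{S}$ is, locally near $g_*$, a graph transverse to the leaves: in suitable coordinates adapted to the foliation (which exist because $\mathcal{D}$ is integrable by Lemma~\ref{thm:lemdistribution}(a)), $\mathcal{S}$ is the graph of a smooth map over a transversal slice, and each leaf $D_g$ is a level set of the transverse coordinate. The transversality $T_x\mathcal{S}\cap\mathcal{D}_x=\{0\}$ forces the restriction to $\mathcal{S}$ of the projection onto the transverse slice to be a local diffeomorphism, so by shrinking $U$ I may assume this projection is injective on $\mathcal{S}\cap U$. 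Since each leaf projects to a single point of the transverse slice, injectivity of the projection immediately yields that $\mathcal{S}$ meets each leaf in at most one point.

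Concretely, the cleanest route is to use a local foliated chart around $g_*$. Because $\mathcal{D}$ is a smooth integrable distribution, there is an open neighborhood and a submersion $\pi$ onto a transversal whose fibers are the plaques of the leaves $D_g$; strong transversality says $\mathrm{d}\pi$ restricts to an isomorphism on each $T_x\mathcal{S}$, so $\pi\vert_{\mathcal{S}}$ is a local diffeomorphism at $g_*$, hence injective on a smaller neighborhood $U$. Two unit volume constant scalar curvature metrics $g_1,g_2\in U$ that are conformal then lie on a common plaque, so $\pi(g_1)=\pi(g_2)$, and injectivity of $\pi\vert_{\mathcal{S}}$ gives $g_1=g_2$.

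The main obstacle I expect is the bookkeeping needed to make the foliated chart genuinely available in the Banach setting and to confirm that $\pi\vert_{\mathcal{S}}$ being a local diffeomorphism at one point can be promoted to global injectivity on a neighborhood. The first issue is handled by the fact that $\mathcal{D}$ arises as the integrable distribution of Lemma~\ref{thm:lemdistribution}, whose leaves $D_g$ are explicitly the conformal slices, so one need not invoke an abstract Frobenius theorem but can work directly with the concrete parametrization $\phi\mapsto\phi\cdot g$. The second issue is routine: a smooth map that is a local diffeomorphism at a point is injective on some open neighborhood of that point, so shrinking $U$ once more suffices. With these in hand, the corollary follows with no further computation.
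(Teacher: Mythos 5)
Your proposal is correct and follows essentially the same route as the paper, whose entire proof is the single sentence that strong transversality of the submanifold of Proposition~\ref{thm:mankappaconstant} with the conformal classes implies local uniqueness of intersections. Your elaboration via a transverse projection (a local diffeomorphism on the constant scalar curvature submanifold, hence injective after shrinking $U$) is the standard way to unpack that sentence, and your remark that the explicit parametrization $\phi\mapsto\phi\cdot g$ of the leaves makes the foliated chart concrete correctly disposes of the only delicate point, namely that two conformal metrics in $U$ lie on a single plaque rather than on distinct plaques of the same leaf.
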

\begin{proof}
Strong transversality of the manifold \eqref{eq:manunitvolconstsccurv} with the conformal classes implies
local uniqueness of intersections.
\end{proof}
We are ready for our main result, which gives a \emph{global} uniqueness result for
the Yamabe problem.
\begin{teo}\label{thm:main}
Assume that $g_*\in\mathcal M$ is a nondegenerate constant scalar curvature metric on $M$, which is the unique unit volume
constant scalar curvature metric in its conformal class. Assume in addition that either one of the following conditions is satisfied:
\begin{itemize}
\item[(a)] $\mathrm{dim}(M)\le7$;
\item[(b)] the Weyl tensor $W_{g_*}$ of $g_*$ satisfies
\begin{equation}\label{eq:condWeyltensor}
\phantom{,\quad\forall\,p\in M}\big\vert W_{g_*}(p)\big\vert+\big\vert\nabla W_{g_*}(p)\big\vert>0,\quad\forall\,p\in M;
\end{equation}
\item[(c)] $\mathrm{dim}(M)\le24$ and $M$ is spin.
\end{itemize}
Then, for $g$ sufficiently $\mathcal C^{2,\alpha}$-close to $g_*$, there is a unique unit volume constant scalar curvature metric
in the conformal class of $g$.
\end{teo}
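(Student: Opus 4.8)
The plan is to derive the global uniqueness from the local statement of Corollary~\ref{thm:localuniqueness} by a contradiction argument in which the compactness of the solution set keeps solutions in nearby conformal classes from escaping to infinity. Existence is never an issue: by the solution of the Yamabe problem \cite{Yam60,Tru68,Aub76,Schoen84}, every conformal class on the compact manifold $M$ already contains a unit volume constant scalar curvature metric, so only uniqueness in the \emph{entire} conformal class has to be proved. Since rescaling a metric by a positive constant is a conformal change preserving both unit volume and constancy of the scalar curvature, we may assume throughout that the metrics in question lie in $\mathcal M$.

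First I would fix the neighborhood $U\subset\mathcal M$ of $g_*$ furnished by Corollary~\ref{thm:localuniqueness}, throughout which every conformal class carries at most one unit volume constant scalar curvature metric. It then suffices to produce a (possibly smaller) neighborhood $\mathcal W\ni g_*$ such that, for each $g\in\mathcal W$, \emph{every} unit volume constant scalar curvature metric in $\big[g\big]_{2,\alpha}$ already lies in $U$: local uniqueness would then upgrade the bound from at most one solution in $U$ to at most one in the whole class, and together with existence this gives exactly one.

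Suppose no such $\mathcal W$ exists. Then there are a sequence $g_n\to g_*$ in $\mathcal M$ and, for each $n$, two \emph{distinct} unit volume constant scalar curvature metrics $g_n^1\neq g_n^2$ in $\big[g_n\big]_{2,\alpha}$. The crux of the proof, and the step I expect to be the main obstacle, is to bound the $g_n^i$ \emph{uniformly} in $n$; this is where the hypotheses (a)--(c) enter, through the Yamabe compactness theory of \cite{KhuMarSch,LiZhang,LiZhang2,Mar}. I would establish the uniform bound by a blow-up argument anchored at the limit: if the $g_n^i$ were to blow up, the concentration would localize at a single point $p_\infty\in M$, and the local model of the degeneration is governed by the Weyl tensor of the background near $p_\infty$; since $g_n\to g_*$, this local obstruction is read off from $g_*$ itself, whose nondegeneracy---either the low-dimensional or spin cases (a) and (c), or the pointwise condition \eqref{eq:condWeyltensor}---excludes the blow-up exactly as in the fixed-background compactness theorems. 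Conditions (a) and (c) depend only on the dimension and the spin structure and are unaffected by the perturbation, while \eqref{eq:condWeyltensor} is a conformally invariant condition (the Weyl tensor is a conformal invariant, and at a zero of $W$ the connection terms in $\nabla W$ drop out, so $\nabla W_{g_*}$ is conformally determined there as well); the delicate point is precisely the continuous dependence of this local Weyl data on the background as $g_n\to g_*$.

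Granting the uniform bounds, the conformal factors relating $g_n^i$ to $g_n$ are bounded in $\mathcal C^{2,\alpha}$ and bounded away from $0$ and $\infty$, so by Arzel\`a--Ascoli I would extract a subsequence with $g_n^1\to\overline g^{\,1}$ and $g_n^2\to\overline g^{\,2}$ in $\mathcal M$. As each $g_n^i$ is conformal to $g_n$ and $g_n\to g_*$, the limits $\overline g^{\,1},\overline g^{\,2}$ are unit volume constant scalar curvature metrics in the conformal class $\big[g_*\big]_{2,\alpha}$. By hypothesis $g_*$ is the \emph{unique} such metric in its own conformal class, so $\overline g^{\,1}=\overline g^{\,2}=g_*$; hence $g_n^1,g_n^2\in U$ for all large $n$. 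But $g_n^1$ and $g_n^2$ share the conformal class $\big[g_n\big]_{2,\alpha}$, so Corollary~\ref{thm:localuniqueness} forces $g_n^1=g_n^2$, contradicting their choice. This contradiction yields the neighborhood $\mathcal W$ and completes the proof.
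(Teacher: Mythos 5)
Your proposal is correct and follows essentially the same route as the paper: a contradiction argument combining the compactness of the set of solutions conformal to a converging sequence of backgrounds (via \cite{KhuMarSch,LiZhang,Mar}, with the openness of condition \eqref{eq:condWeyltensor} handling case (b)) with uniqueness in the class of $g_*$ to force the putative second solutions into the neighborhood of Corollary~\ref{thm:localuniqueness}. Your version is if anything slightly more explicit, in that it treats both solution sequences symmetrically and records that existence comes from the resolution of the Yamabe problem.
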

\begin{proof}
Either one of conditions (a), (b) or (c) guarantees that, given a sequence $g_n$ of Riemannian metrics on $M$ with $\lim\limits_{n\to\infty}g_n=g_*$,
then the set unit volume constant scalar curvature metrics that are conformal to some $g_n$ is compact. This follows from the arguments in
\cite{KhuMarSch, LiZhang, Mar}, see in particular \cite[Lemma10.1]{KhuMarSch}. Note that condition \eqref{eq:condWeyltensor} is open (in the
$\mathcal C^1$-topology), hence it holds for every $g_n$ with $n$ sufficiently large if it holds for $g_*$.
By contradiction, assume the existence of a sequence $g_n$ of unit volume constant scalar curvature metrics on $M$, with $\lim\limits_{n\to\infty}g_n=g_*$,
and such that the conformal class of $g_n$ contains a distinct unit volume constant scalar curvature metric $h_n\ne g_n$ for all $n$.
By compactness, up to subsequences there exists the limit $\lim\limits_{n\to\infty}h_n=h_*$; by continuity, $h_*$ is a unit volume
constant scalar curvature metric in the conformal class of $g_*$. By uniqueness in the conformal class of $g_*$, it must be $h_*=g_*$, but this contradicts the local uniqueness
result of Corollary~\ref{thm:localuniqueness}, which concludes the proof.
\end{proof}
\end{section}

\end{document}